\DeclareMathOperator{\Nef}{{Nef}}
\DeclareMathOperator{\mult}{{mult}}
\newtheorem{thm}{Theorem}[section]
\newtheorem{prop}[thm]{Proposition}
\newtheorem{defi}[thm]{Definition}
\newtheorem{xrem}{Remark}
\begin{document}
\baselineskip=17pt

\subjclass[2010]{Primary 14J60; Secondary 14H60, 14J10}
\author{Rupam Karmakar}
\author{Praveen Kumar Roy}
\address{Indian Statistical Institute 203 Barrackpore Trunk Road Kolkata 700 108, WB, India}
\email[Rupam Karmakar]{rupammath91@gmail.com}

\address{UM-DAE Centre for Excellence in Basic Sciences
``Nalanda'', Opp Nano Sciences Building,
University of Mumbai, Vidyanagari,
Mumbai 400098, India.}
\email[Praveen Kumar Roy]{praveen.roy@cbs.ac.in}

\title{Seshadri constants on some blow-ups of projective spaces}
\maketitle

\begin{abstract}
Let $X^n_{r,s}$ denote the blow-up of $\mathbb{P}^n$ along $r$ general lines and $s$ general points. In this paper, we focus on $l$-very ample line bundles on $X^n_{0,s}$ and investigate their Seshadri constants with some restrictions on $s$. Additionally, we compute the nef cone of $X^3_{r,0}$ for $r\leq 6$ and study the Seshadri constants of some ample line bundles on it. We also examine the Seshadri constants of some ample line bundles on $X^4_{r,0}$ ($r\leq 7$) and $X^5_{r,0}$ ($r \leq 5$).
\end{abstract}

\section{introduction}
Let $X$ be a projective variety over an algebraically closed field $k$ of characteristic zero. For a nef line bundle $L$ on $X$, the \textit{Seshadri constant} of $L$ at a point $x \in X$ is defined as 
\begin{align*}
\epsilon(X, L, x) := \inf_{x \in C} \frac{L \cdot C}{\mult_x C},
\end{align*}
where the infimum is taken over all curves $C$ in $X$ passing through $x$. Here  $L \cdot C$ and $\mult_x C$ denote the intersection number and multiplicity of $C$ at $x$ respectively. Sometimes we write $\varepsilon(L, x)$ instead of $\varepsilon(X, L, x)$ if there is no confusion about the variety $X$.

Seshadri constant measures the local positivity of a nef line bundle on a projective variety around a given point. Motivated by the Seshadri's criterion for ampleness of a line bundle, the Seshadri constant was introduced by Demailly in 1992 mainly to tackle the Fujita conjecture. Over the years it has emerged as an interesting invariant and has grown on its own as an important topic of research in algebraic geometry.
Most of the research on Seshadri constants has been conducted in the case of surfaces. One of the most important results is due to Ein and Lazarsfeld (see \cite{EL}), who proved that for any ample line bundle $L$ on a surface $X$ the Seshadri constant $\varepsilon(X, L, x) \geq 1$ for a general point $x\in X$. Another fundamental result on Seshadri constant was shown by Miranda (see \cite{L1}, Example 5.2.1), which says that given a real number $\delta > 0$, there exists an algebraic surface $X$, an ample line bundle $L$ on $X$ and a point $x \in X$ such that $\varepsilon(X, L,x) < \delta$. This result in particular says that Seshadri constants can be arbitrarily small.

Studying Seshadri constants on higher-dimensional varieties has not been as successful as on surfaces. A few cases where the Seshadri constants have been studied on higher-dimensional varieties are Fano varieties (see \cite{Lee}), toric varieties (see \cite{I}), abelian varieties (see \cite{B}, \cite{L2}, \cite{N}), Grassmann bundles over curves (see \cite{BHNN}), and quot schemes (see \cite{GHS}) etc. For a detailed exposition about the work on Seshadri constants, see \cite{BDHKKSS}.

Computing Seshadri constants precisely is more often than not a very difficult task. So most of the work on Seshadri constants have been on giving bounds on them and sharpening the already known bounds when precise computation is difficult.

In this paper, we explore Seshadri constants on blow-ups of projective spaces $\mathbb{P}^n$ for $n=3,4$ and $5$. In Section \ref{preliminaries}, we establish the necessary notations and definitions and provide a brief overview of Seshadri constants. Moving on to Section \ref{points blow-up}, we focus on $X_{0,s}^n$, which is a blow-up of $\mathbb{P}^n$ at $s$ general points, and present our  main result, Theorem \ref{theorem-1}. This theorem demonstrates that for $s < 2n$ and for a \textit{l-very ample line bundle} $L$ (as defined in \cite{DP}), $\varepsilon(L, x) \geq l$. In Theorem \ref{theorem-1.1}, we extend this conclusion to the case where $s \geq 2n$, assuming some additional conditions. 

Let $X_{r,0}^n$ be the blow-up of $\mathbb{P}^n$ at $r$ general lines in $\mathbb{P}^n$. In Section \ref{section-Line blow-up}, using some results from \cite{DPU}, we compute the nef cones of line bundles on $X_{r,0}^3$ for $r \leq 6$, as seen in Theorem \ref{theorem-2}, and study Seshadri constants on them in Theorem \ref{Seshadri-constant-on-lineblowup-of-P3} and Proposition \ref{Prop:n=3;r=5;Sesh_cons}. In Section \ref{Line blow-ups of}, we draw upon results regarding the positive cones of $X_{r,0}^4$ for $r \leq 7$ and $X_{r,0}^5$ for $r\leq 5$ from \cite{PP} to establish conclusions about the Seshadri constants of nef line bundles, as presented in Theorem \ref{Seshadri-constant-on-lineblowup-of-P4}, Proposition \ref{prop about P4} and Theorem \ref{Sesh-cons-on-blow-up-of-P5}.

\section{Preliminaries}\label{preliminaries}
Let $\pi_{r,s} : X_{r, s}^n \rightarrow \mathbb{P}^n$ be the blow-up of $\mathbb{P}^n$ at $r$ general lines $l_1,..., l_r$ and $s$ general points $ p_1,..., p_s$. In this article, our primary focus will be on $X_{0,s}^n$, a blow-up of $\mathbb{P}^n$ at $s$ general points $p_1, ..., p_s$, and on $X_{r,0}^n$, the blow-up of $\mathbb{P}^n$ at $r$ general lines $l_1, ..., l_r$. We denote the respective blow-up maps as $\pi_{0,s} : X_{0,s}^n \rightarrow \mathbb{P}^n$ and $\pi_{r,0} : X_{r,0}^n \rightarrow \mathbb{P}^n$.

The real N\'eron-Severi groups, denoted as $N^1(X_{r, s}^n)$, are generated by $H$, $e_i, \, i=1,...,s$ and $E_i, \, i=1, ..., r$. Here, $H$ represents the pullback of the hyperplane class in $\mathbb{P}^n$, $e_i$ corresponds to the exceptional divisor associated to the point $p_i$ for $i=1,2,...,s$ and $E_i$ corresponds to the exceptional divisor associated to the line $l_i$ for $i=1,2,...,r$.

Let $X$ be a projective variety over any field $k$. A line bundle $L$ on $X$ (or a Cartier divisor $D$ with $\mathbb{Z}$ or $\mathbb{R}$ coefficients) is \textit{nef} if
\begin{align*}
\int_C c_1(L) \geq 0 \quad 
\left( \text{or} \; D \cdot C \geq 0 \right),
\end{align*}
for all irreducible curves $C$ in $X$.
The \textit{nef cone} $\Nef(X) \subset N^1(X)$ is the convex cone of all nef $\mathbb{R}$-divisor classes on $X$.

Coming back to Seshadri constant, it is not difficult to see using the definition of Seshadri constant of an ample line bundle $L$ at a point $x\in X$, that $\varepsilon(X,L,x) \leq \sqrt[n]{L^n}$. Also, Seshadri's criterion for ampleness says that $L$ is ample if and only if $\varepsilon(X,L,x) >  0$ for all $x \in X$. Now, for an ample line bundle $L$ on $X$, we define
\begin{eqnarray*}
 \varepsilon(X,L,1) &:=&  \sup_{x\in X} \hspace{1mm} \{\varepsilon(X,L,x)\}, \ \text{and} \\
 \varepsilon(X,L) &: =& \inf_{x\in X} \hspace{1mm}\{\varepsilon(X,L,x)\}.
\end{eqnarray*}
Note that $0 < \varepsilon(X,L) \leq \varepsilon(X,L,x) \leq \varepsilon(X,L,1) \leq \sqrt[n]{L^n}$ holds for every point $x \in X$.

\section{Point blow-ups of $\mathbb{P}^n$}\label{points blow-up}
In this section, we prove some results on Seshadri constants of $l$-very ample line bundles on $X_{0,s}^n$. 
But before that we state the following definition which we use.
\begin{defi}
Let $X$ be a smooth complex projective variety. For an integer $l \geq 0$, a line bundle $\mathcal{O}_{X}(D)$ on $X$ is said to be $l$-very ample if for any 0-dimensional subscheme $Z \subset X$ of length $ h^0(Z, \mathcal{O}_Z) = l + 1$, the restriction map $H^0(X, \mathcal{O}_{X}(D)) \longrightarrow H^0(Z, \mathcal{O}_{X}(D) |_{Z})$ is surjective.
\end{defi}
\vspace{1mm}
Let $L = dH - \sum\limits_{i=1}^{s}m_ie_i$ be a line bundle on $X_{0,s}^n$. Fix $l\geq 0$, then for any $s \geq n+3$ we recall the following integer introduced in \cite[Eq. 2.1]{DP}:
$$
b_l := 
\begin{cases}
\min\{n -1, s-n-2\} - l -1, & \text{if $m_1 = d-l-1$ and $m_i = 1$}, \forall \ i\geq 2, \\ 
\min\{n, s-n-2\} - l - 1, & \text{otherwise}.
\end{cases}
$$
For $s \leq n+2$, define $b_l := -l-1$. 

We now recall the following characterisation of a line bundle $L$ on $X_{0,s}^n$ to be $l$-very ample. 

\begin{prop}[Theorem 2.2, \cite{DP}]\label{proposition}
Let $\pi_{0,s} : X_{0,s}^n \to \mathbb{P}^n$ be a blow-up of $\mathbb{P}^n$ at $p_1, p_2, ..., p_s \in \mathbb{P}^n$ and let the points $p_1, p_2, ..., p_s$ be in general positions. Let $L = dH - \sum\limits_{i=1}^{i=s}m_ie_i$ be a line bundle. Assume that either $s \leq 2n$ or $s\geq 2n+1$ and $d$ is large enough namely
\begin{align*}
\sum_{i=1}^s m_i - nd \leq b_l,
\end{align*}
where $b_l$ defined as before. Then for any non zero integer $l$, a line bundle $L$ is $l$-very ample if and only if
\begin{align*}
m_i \geq l, \quad \forall \; i \in \{1, 2, ..., s\}, \; \text{and} \\
d-m_i - m_j \geq l, \quad \forall \; i, j \in \{1, 2, ..., s\} \, (i \neq j).
\end{align*}
\end{prop}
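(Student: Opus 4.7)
The plan is to split the proof into two implications: necessity of the numerical inequalities, which follows from testing $l$-very ampleness against carefully chosen zero-dimensional subschemes, and sufficiency, which amounts to producing the required surjection on global sections for every length-$(l+1)$ subscheme $Z \subset X_{0,s}^n$.

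For necessity, write $L = dH - \sum_{i=1}^s m_i e_i$. First I would restrict $L$ to each exceptional divisor $e_i \cong \mathbb{P}^{n-1}$, where it becomes $\mathcal{O}_{\mathbb{P}^{n-1}}(m_i)$. Since $l$-very ampleness is inherited by a smooth subvariety (by a diagram chase applied to those zero-dimensional subschemes of length $l+1$ that happen to lie in $e_i$), and $\mathcal{O}_{\mathbb{P}^{n-1}}(m_i)$ is $l$-very ample exactly when $m_i \geq l$, the first family of inequalities falls out. For the second, take the strict transform $\widetilde{\ell}_{ij}$ of the line in $\mathbb{P}^n$ joining $p_i$ and $p_j$; this is a smooth rational curve with $L \cdot \widetilde{\ell}_{ij} = d - m_i - m_j$ and $L|_{\widetilde{\ell}_{ij}} \cong \mathcal{O}_{\mathbb{P}^1}(d - m_i - m_j)$, so the $l$-very ampleness hypothesis forces $d - m_i - m_j \geq l$.

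For sufficiency, I would induct on $l$. The base case $l = 0$ is the classical very-ampleness criterion for such blow-ups under the same numerical hypotheses. For the inductive step, given a length-$(l+1)$ subscheme $Z \subset X_{0,s}^n$, the plan is to find an auxiliary effective divisor $D$ (typically a hyperplane pulled back from $\mathbb{P}^n$, or the strict transform of a low-degree hypersurface passing through some of the $p_i$) that meets $Z$ in a subscheme of controlled length and leaves a smaller residual $Z'$. The short exact sequence
\begin{align*}
0 \to \mathcal{O}_X(L-D) \otimes \mathcal{I}_{Z'} \to \mathcal{O}_X(L) \otimes \mathcal{I}_Z \to \mathcal{O}_D(L) \otimes \mathcal{I}_{Z \cap D} \to 0
\end{align*}
then reduces the desired surjectivity to a lower-order very-ampleness statement on $D$ (handled by an inner induction on $l$ and on $n$, since $D$ is itself a blow-up of a $\mathbb{P}^{n-1}$ at the relevant points) together with the vanishing $H^1\bigl(X, \mathcal{O}_X(L-D) \otimes \mathcal{I}_{Z'}\bigr) = 0$.

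The main obstacle, I expect, is establishing this $H^1$-vanishing in the regime $s \geq 2n+1$, where general points no longer impose independent conditions on every linear subsystem and unexpected effective (in particular, special) divisors can appear. The hypothesis $\sum m_i - nd \leq b_l$ is exactly the threshold that rules out such obstructions, and the two branches in the definition of $b_l$ reflect the two combinatorial patterns of multiplicities that sit on the boundary, namely one distinguished multiplicity $m_1$ versus a uniform vector. I would handle this range by a Cremona-type reduction to a line bundle of smaller degree combined with a Bertini-style choice of $D$ in general position with $Z$, closely following the strategy of \cite{DP}. The subrange $s \leq 2n$ is significantly easier because there the points are projectively independent enough that the residual scheme $Z'$ automatically imposes independent conditions on the residual linear system, and the inductive step goes through without the auxiliary bound on $\sum m_i - nd$.
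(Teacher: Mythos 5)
The paper itself offers no proof of this proposition: it is imported verbatim from Theorem~2.2 of \cite{DP} (hence the bracketed attribution in the statement), so there is no in-paper argument to compare yours against. Judged on its own merits, your necessity direction is complete and correct: $L|_{e_i} \cong \mathcal{O}_{\mathbb{P}^{n-1}}(m_i)$ and $L|_{\widetilde{\ell}_{ij}} \cong \mathcal{O}_{\mathbb{P}^1}(d-m_i-m_j)$, $l$-very ampleness passes to closed subvarieties because the restriction map to a length-$(l+1)$ subscheme factors through sections on the subvariety, and $\mathcal{O}_{\mathbb{P}^N}(m)$ is $l$-very ample exactly when $m \geq l$. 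That is the standard argument and nothing more is needed there.

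The sufficiency direction, however, is a plan rather than a proof, and the plan defers exactly the content of the theorem. Two concrete gaps. First, your base case $l=0$ (``the classical very-ampleness criterion for such blow-ups'') is not classical for $n \geq 3$ and $s \geq 2n+1$: global generation and very ampleness of $dH - \sum m_i e_i$ at general points in this range are themselves main results of \cite{DP}, obtained by a careful analysis of the base locus of the linear system and of the linear subspaces spanned by subsets of the $p_i$, so you cannot anchor the induction on them for free. Second, the $H^1$-vanishing of $\mathcal{O}_X(L-D)\otimes \mathcal{I}_{Z'}$ that drives your residual exact sequence is precisely where the hypothesis $\sum_{i=1}^s m_i - nd \leq b_l$ and the two branches in the definition of $b_l$ must enter; you neither specify which auxiliary divisor $D$ makes the induction close up nor explain why that bound is the correct threshold, and saying you would ``closely follow the strategy of \cite{DP}'' concedes that the essential step is being cited rather than proved. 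If, like the authors, you intend this proposition as an external input, a citation is the honest presentation; if you intend to reprove it, the cohomological vanishing has to be carried out in full.
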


Now, we will proceed to prove the first main theorem of this section. 
\begin{thm}\label{theorem-1}
Let $\pi_{0,s} : X_{0,s}^n \to \mathbb{P}^n$ be a blow-up of $\mathbb{P}^n$ at $s\leq 2n$ points $p_1, p_2, ..., p_s$ in general positions. Let $L = dH - \sum_{i=1}^{i=s}m_ie_i$ be a $l$-very ample line bundle on $X_{0,s}^n$, and let $x$ be a point in $X_{0,s}^n$. Then, we have
\begin{align*}
\varepsilon(L, x) \geq l.
\end{align*}
\end{thm}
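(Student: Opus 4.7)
The plan is to split $L$ into a very ample piece plus a nef residue and then use the bound $\mult_x C \le H \cdot C$ for non-exceptional curves. By Proposition~\ref{proposition}, the $l$-very ampleness of $L = dH - \sum_i m_i e_i$ is equivalent to the conditions $m_i \ge l$ for every $i$ and $d - m_i - m_j \ge l$ for every $i \ne j$. I would write $L = lH + L'$ with
\[
L' := (d-l) H - \sum_i m_i e_i.
\]
The two numerical conditions say exactly $L' \cdot f_i \ge 0$ and $L' \cdot (\ell - f_i - f_j) \ge 0$, where $\ell$ is the strict transform of a general line and $f_i$ is a line in the exceptional divisor $e_i$.

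My first step would be to prove $L'$ is nef on $X_{0,s}^n$. A curve $C \subset e_i$ gives $L' \cdot C = m_i \deg C \ge 0$ directly. Otherwise $C$ is the strict transform of an irreducible $C' \subset \mathbb{P}^n$ of degree $a$ with $\mult_{p_i}(C') = b_i$, so $L' \cdot C = (d-l) a - \sum_i m_i b_i$. Setting $S = \{i : b_i > 0\}$ and ordering indices in $S$ so that $m_{i_1} \ge m_{i_2} \ge \cdots$, I would intersect $C'$ with a hyperplane in $\mathbb{P}^n$ through the top $\min(n,|S|)$ points (padded with generic auxiliary points if $|S| < n$) to obtain $\sum_{k \le n} b_{i_k} \le a$, and with a second hyperplane through the remaining indices $i_{n+1}, \ldots, i_{|S|}$ to obtain $\sum_{k > n} b_{i_k} \le a$. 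The hypothesis $s \le 2n$ is used precisely here, since it guarantees the remaining block has at most $n$ points and hence fits through one hyperplane. Together with the pair inequality $m_{i_1} + m_{i_{n+1}} \le d - l$ (or $m_{i_1} \le d - 2l$ when $|S| \le n$), this gives
\[
\sum_i m_i b_i \le m_{i_1} \sum_{k \le n} b_{i_k} + m_{i_{n+1}} \sum_{k > n} b_{i_k} \le (m_{i_1} + m_{i_{n+1}})\, a \le (d-l)\, a,
\]
so $L' \cdot C \ge 0$.

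Once $L'$ is nef, the theorem follows immediately. If $C \subset e_i$ then $\mult_x C \le \deg_{e_i}(C)$ and $L \cdot C = m_i \deg C \ge l \cdot \mult_x C$. Otherwise I would use the inequality $\mult_x C \le H \cdot C$: for $x \notin \bigcup e_j$ this reduces to $\mult_{\pi(x)}(C') \le \deg C'$, while for $x \in e_i$ it follows from $\mult_x C \le \mult_{p_i}(C') = b_i \le a$. Combining these,
\[
L \cdot C = l\,(H \cdot C) + L' \cdot C \ge l\,(H \cdot C) \ge l \cdot \mult_x C.
\]

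The hard part is the nefness of $L'$: the two-hyperplane Bezout argument requires some casework on $|S|$ and makes genuine use of $s \le 2n$. One also has to arrange that the chosen hyperplanes do not contain $C'$, which can be done by stratifying over the linear span of $C'$ and, in the degenerate case where $C'$ lies in a proper linear subspace, restricting to a lower-dimensional projective space and iterating.
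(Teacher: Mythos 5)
Your proposal is correct and follows essentially the same route as the paper: the paper likewise splits the Seshadri ratio as $\tfrac{lH\cdot C}{m}+\tfrac{(d-l)H\cdot C-\sum m_i\mult_{p_i}C}{m}$, shows the second term is nonnegative by B\'ezout applied to hyperplanes through the blown-up points together with the inequalities $d-l\ge m_i+m_j$ coming from the $l$-very ampleness criterion, and concludes from $H\cdot C\ge\mult_xC$. The only differences are organizational: you isolate the nonnegativity as nefness of $L-lH$ and use two hyperplanes after sorting the indices by the $m_i$, whereas the paper uses a single hyperplane through the $n$ points of largest multiplicity on $C$ and absorbs the rest by pairing $m_i$ with $m_{i+n}$; your explicit concern that the chosen hyperplane might contain $C'$ is a genuine subtlety that the paper's proof passes over silently.
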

\begin{proof}

Let $B$ be a reduced irreducible curve in $X_{0,s}^n$ passing through $x$ and $m := \mult_x B$. Then either $\pi_{0,s}(B) = \text{point}$ or $B$ is the strict transform of a curve $C$ in $\mathbb{P}^n$ i.e., $B= \tilde{C}$. \\
\underline{Case I:} \, Let $1 \leq s \leq n$.

Let $\pi_{0,s}(B) = \textit{point}$ i.e., $\pi_{0,s}(B) \in \{p_1, p_2, ..., p_s \}$, then 
$B \subset e_i$ for some $i$ and therefore it is numerically equivalent to 
$\text{\text{deg(B)}}f_i$ where $f_i$ denotes the class of a line in $e_i$. 
Note that the intersection table is given by 
\[
H \cdot f_i = 0, \quad \text{and} \quad e_j\cdot f_i = -\delta_{i,j}.
\]
Since $B$ passes through a point $x$ of multiplicity $m$, we have $\text{deg(B)} \geq m$. Therefore,
\begin{align*}
\frac{L \cdot B}{m} = \frac{(\deg B)m_i}{m} \geq m_i \geq l
\end{align*}
where that last inequality follows from Proposition \ref{proposition}. 

Now, assume that $B$ is the strict transform of a curve $C$ in $\mathbb{P}^n$ i.e., $B= \tilde{C}$. Then
\begin{eqnarray}\label{equation:1}\nonumber
\frac{L \cdot B}{m} &=& \frac{(dH - \sum_{i = 1}^{s} m_i e_i) \cdot B}{m} \\ \nonumber
&\geq&  \frac{d\cdot \text{deg(C)} - \sum_{i=1}^{s}m_i \mult_{p_i}C}{m} \\ 
&\geq&   \frac{l\cdot \text{deg(C)}}{m} + \frac{(d-l)\cdot \text{deg(C)} - \sum_{i=1}^{s}m_i \mult_{p_i}C}{m}.
\end{eqnarray}
Now, we re-arrange $p_1, p_2, ...,p_s$, if necessary, such that $\mult_{p_1}C \geq \mult_{p_2}C \geq ... \geq \mult_{p_s}C$. Choose the largest integer $k$ such that $\mult_{p_k}C \neq 0$. Since  $p_1, p_2, ..., p_s$ are in general positions 
and $s\leq n$, we can find a hyperplane $\bar{H}$ which passes through $p_1,..., p_{k-1}, p_{k+1},..., p_{s}$ but misses $p_k$. Therefore, $C \nsubseteq \bar{H}$, and by B\'{e}zout's theorem, we obtain:
\begin{eqnarray*}
\text{deg(C)} = \bar{H} \cdot C \geq \sum_{i=1}^{k-1} \mult_{p_i}C.  
\end{eqnarray*}
So,
\begin{eqnarray}\label{equation:2}
(d-l) \bar{H}\cdot C &\geq& (d-l)\sum_{i=1}^{k-1} \mult_{p_i}C \cr 
&\geq& \sum_{i=1}^{k-2} m_i \mult_{p_i}C + (m_{k-1} + m_k) \mult_{p_{k-1}}C \cr 
&\geq& \sum_{i=1}^{k-2} m_i \mult_{p_i}C + m_{k-1} \mult_{p_{k-1}}C + m_k \mult_{p_k}C \cr
&\geq&  \sum_{i=1}^{k} m_i \mult_{p_i}C \cr 
&=& \sum_{i=1}^{s} m_i \mult_{p_i}C
\end{eqnarray}
since by Proposition \ref{proposition}, $(d-l) \geq m_i + m_j$  for $i \neq j$ and in particular, $d-l \geq m_i$ for all $i$.\\
Then continuing the calculation from \eqref{equation:1} and using \eqref{equation:2}, the Seshadri ratio becomes
\begin{align*}
\frac{L\cdot B}{m} \geq \frac{l\cdot \text{deg(C)}}{m} \geq l.
\end{align*}
\vspace{2mm} 
\underline{Case II:}  \,   Assume $n \leq s \leq 2n$. \\
If $\pi_{0, s}(B) = point $, then we proceed as in Case I and obtain $\frac{L\cdot B}{m} \geq  l$.
\vspace{2mm}
Otherwise, $B$ is the strict transform of a curve $C$ in $\mathbb{P}^n$ i.e., $B= \tilde{C}$. 
As in Case $1$, we rearrange the points such that $\mult_{p_1}C \geq \mult_{p_2} C \geq... \geq \mult_{p_s} C$.
Since $p_1, p_2, ..., p_n$ are in general position, they span a hyperplane $ H_1 = \langle p_1, p_2, ..., p_n \rangle$. 
Now, if $\mult_{p_i}C \neq 0$ for any $n+1\leq i \leq s$, then $C \not\subset H_1$ and by B\'ezout's theorem we obtain
\begin{align*}
H_1 \cdot C \geq \sum_{i = 1}^n \mult_{p_i} C. 
\end{align*}
From this we get (after substituting $m_i = 0$ for $i=s+1,s+2,...,2n$),
\begin{eqnarray*}
(d -l) H_1 \cdot C   &\geq& (d - l) \sum_{i = 1}^n \mult_{p_i}C \cr 
&\geq&  \sum_{i = 1}^ n (m_i + m_{i +n})\mult_{p_i}C \cr
&\geq& \sum_{i=1}^n m_i (\mult_{p_i}C) + \sum_{i + n \leq s} m_{i +n} (\mult_{p_{i+ n}} C) \cr
&\geq& \sum_{i =1}^s m_i (\mult_{p_i}C), 
\end{eqnarray*}
which implies that
\begin{eqnarray}\label{Iniquality to be used in next thm}
\frac{(d -l) H_1 \cdot C - \sum_{i =1}^s m_i (\mult_{p_i}C)}{m} \geq 0.
\end{eqnarray}
Therefore,
 \begin{eqnarray*}
\frac{L \cdot B}{m} &=& \frac{(dH - \sum_{i = 1}^{s} m_i e_i) \cdot B}{m} \cr 
&\geq&  \frac{dH_1\cdot C - \sum_{i=1}^s m_i \mult_{p_i}C}{m} \cr
&\geq& \frac{l H_1\cdot C}{m} + \frac{(d-l)H_1\cdot C - \sum_{i=1}^s m_i \mult_{p_i}C}{m} \cr
&\geq& \frac{l H_1\cdot C}{m} \cr
&\geq& l.
\end{eqnarray*}
Now, if $\mult_{p_i}C = 0$ for all $i \geq n+1$, and $C \not\subset H_1$ we go back to Case I. Therefore, assume that $\mult_{p_i}C = 0$ for all $i \geq n+1$ and that $C \subset H_1$. In this case, we restrict 
our attention to the following setting:
Let
\[
\pi_{0,s}|_{H_{1_{0,n}}} : H_{1_{0,n}} \hookrightarrow  X_{0,s}^n \to H_1 \cong \mathbb{P}^{n-1} \hookrightarrow \mathbb{P}^n,
\]
be a blow-up of $H_1$ at $p_1,...,p_n$, with $L|_{H_{1_{0,n}}} = dH_2 - \sum\limits_{i=1}^{i=n}m_ie_{i,2}$. Here, $H_2$ and $e_{i,2}$ 
denotes the pullback of a hyperplane in $\mathbb{P}^{n-1}$ and the exceptional divisors 
corresponding to the map $\pi_{0,s}|_{H_{1_{0,n}}}$ respectively. Note that $e_{i} \cap {H_{1_{0,n}}} = e_{i,2}$. 
Under these notations we have 
\begin{eqnarray}\label{equation:3}
\frac{L\cdot B}{m} = \frac{L|_{H_{1_{0,n}}} \cdot B}{m} = 
\frac{(dH_2 -  \sum\limits_{i=1}^{n}m_ie_{i,2})\cdot B}{m} = \frac{d\cdot \text{deg(C)} - \sum\limits_{i=1}^{n}m_i\mult_{p_i}C}{m}.
\end{eqnarray}
Now, proceeding as in the Case I, we choose a hyperplane $H_2'$ which contains $p_1,...,p_{k-1},p_{k+1},...,$ $p_n$ 
but misses $p_k$, where $k$ is the largest number such that $\mult_{p_k}C \neq 0$. Then $C\not\subset H_2'$ and by 
B\'ezout's theorem we get
\[
H_2' \cdot C \geq \sum\limits_{i=1}^{k-1}\mult_{p_i}C.
\]
Now, following \eqref{equation:1},\eqref{equation:2} and \eqref{equation:3} we get the required result.
Therefore, from Case I and Case II we conclude that $\varepsilon(L, x) \geq l$. 
\end{proof}
 When $2n < s \leq 3n -1$, we obtain the following result concerning the Seshadri constant of an $l$-very ample line bundle of semi-homogeneous type.
 \begin{thm}\label{theorem-1.1}
 Let $\pi_{0,s} : X_{0,s}^n \to \mathbb{P}^n$ be a blow-up of $\mathbb{P}^n$ at points $p_1, p_2, ..., p_s$ in general positions, where $ 2n < s\leq 3n -1$. Let $L = dH - \sum_{i=1}^{s}m_ie_i$, be an $l$-very ample line bundle on $X_{0,s}^n$ with $d$ large enough, namely $\sum_{i=1}^s m_i - nd \leq b_l$, where $b_l$ is as defined in Proposition \eqref{proposition}. Let $m_i = l$ for $i = 2n +1, ..., 3n - 1$, and $x \in X_{0,s}^n$ be any point. Then, we have
 \begin{align*}
 \varepsilon(L, x) \geq l.
 \end{align*}
   \end{thm}
   
\begin{proof}
Let $B$ be a reduced and irreducible curve in $X_{0,s}^n$ passing through the point $x$ with multiplicity $m$. If $\pi_{0,s}(B) = point$, then as we have shown in Theorem \ref{theorem-1}, $\frac{L\cdot B}{m} \geq l$. So, let's assume that $B$ is strict transform of a curve $C$ i.e., $B = \tilde{C}$. Also, rearrange the points as in the previous theorem, if necessary, such that $\mult_{p_1}C \geq \mult_{p_2} C \geq... \geq \mult_{p_s} C$. Moreover, assume that $\mult_{p_{2n+1}}C \neq 0$; otherwise, we reduce to the previous theorem. Then
   \begin{eqnarray*}
   \frac{L \cdot B}{m} &=& \frac{\left(dH - \sum_{i = 1}^{s} m_i e_i \right) \cdot B}{m} \\
    &=& \frac{(l H - \sum_{i = 2n +1}^{3n -1}m_i e_i) \cdot B + ( (d - l)H -\sum_{i = 1}^{2n} m_i e_i)\cdot B}{m}.
   \end{eqnarray*}
We can then choose a hyperplane $H'$ passing through $p_{2n+1}, ..., p_{3n-1}$ and $\pi(x)$ that doesn't contain $p_{2n}$. Then $C \not\subset H'$ and therefore by B\'{e}zout's theorem we get
   \begin{align*}
   H' \cdot C \geq \sum\limits_{i = 2n+1}^{3n -1} \mult_{p_i} C + m.
   \end{align*}
Now, the fact that $m_i = l$ for $i = 2n +1, ..., 3n - 1$ further implies that
\[
lH\cdot B = lH' \cdot C \geq  \sum\limits_{i = 2n+1}^{3n -1}l (\mult_{p_i} C) + lm \geq \sum\limits_{i = 2n+1}^{3n -1}m_i (\mult_{p_i} C) + lm.
\] 
Therefore, 
\[
\frac{(l H - \sum_{i = 2n +1}^{3n -1}m_i e_i) \cdot B}{m} \geq l.
\]
Also, from the inequality \eqref{Iniquality to be used in next thm} of proof of Theorem \ref{theorem-1}, we see that  $\left((d - l)H -\sum\limits_{i = 1}^{2n} m_i e_i \right)\cdot B \geq 0$.
Hence the Seshadri ratio $\frac{L \cdot B}{m} \geq l$ implies $\varepsilon(L, x) \geq l$.        
    \end{proof}

\section{Line blow-ups of $\mathbb{P}^3$}\label{section-Line blow-up}
We now turn our attention to $X_{r,0}^3$, i.e., blow-up of $\mathbb{P}^3$ at $r$ general lines. First we compute the nef cone of divisors on $X_{r,0}^3$ for $r\leq 6$, and then study the Seshadri constants of nef line bundles on $X_{r,0}^3$. Before stating our next theorem, we would like to note the following: \\
Given any three, among the four general lines $l_1,...,l_4$ in $\mathbb{P}^3$, there exists a unique quadric $\mathcal{Q}_{i,j,k}$ (which is isomorphic to $\mathbb{P}^1 \times \mathbb{P}^1$) containing three of them on one ruling \cite[Lemma 3.1]{DPU}. The remaining fourth line $l_m$ ($m \neq i,j,k$) intersect $\mathcal{Q}_{i,j,k}$ at two points through which there exist two lines $t,t'$ lying on the other ruling and intersecting all four given lines transversely. It is not difficult to see that, 
\[
\mathcal{Q}_{i,j,k} \cap \mathcal{Q}_{i,j,m} = l_i\cup l_j\cup t\cup t'
\]
for all (distinct) $i,j,k,m \in \{1,2,3,4\}$. Therefore, 
\[
\bigcap\limits_{i\neq j\neq k \in \{1,..,4\}} \mathcal{Q}_{i,j,k} = t\cup t'.
\]
Now if we have $l_1,...,l_5 \subset \mathbb{P}^3$ to be the five general lines, then for every four $l_1,...,\hat{l_i},...,l_5$ of them we get two transversal lines which we denote by $t_i,t_i'$.
\begin{thm}\label{theorem-2}
 Let $X_{r,0}^3$ be a blow-up of $\mathbb{P}^3$ at $r$ general lines $l_1, l_2, ... , l_r$. Then for $r \leq 6$, the nef cones of divisors on $X_{r,0}^3$ are described as:
\begin{enumerate} 
\item $\Nef( X_{r,0}^3) = \big\{dH - \sum\limits_{i=1}^r m_iE_i  \, \,|  \, \, d\geq \sum\limits_{i=1}^r m_i,  \, \, d \geq 0, m_i \geq 0 \; \forall \; i \in \{1,...,r\} \big\}$ for $r=1, ..., 4$.
 
\item $\Nef(X_{5,0}^3) = \big\{ dH - \sum\limits_{i=1}^5 m_iE_i  \, \,|  \, \, d\geq (m_1+ m_2 + ... + m_5) - m_i, \;  d \geq 0, m_i \geq 0 \; \forall \; i \in \{1,..., 5\} \big\}. $
 
\item $\Nef(X_{6, 0} ^3) = \big\{ dH - \sum\limits_{i=1}^6 m_iE_i  \, \,|  \, \, d \geq m_i +m_j +m_k + m_l, \, \,  i <  j <  k <  l , \{ i, j, k, l\} \in \{1, 2, ..., 6\}, \, \, d \geq 0, \, \,m_i \geq 0 \; \forall \; i \in \{1,..., 6\} \big\}.$
 \end{enumerate}
\end{thm}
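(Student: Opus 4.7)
The approach is to describe $\Nef(X_{r,0}^3)$ as the dual of the Mori cone $\overline{\NE}(X_{r,0}^3)$, whose generators for $r \leq 6$ are supplied by \cite{DPU}. The relevant curve classes are (i) the fibers $f_i$ of the $\mathbb{P}^1$-bundle structure on each exceptional divisor $E_i \to l_i$ — with $E_i \cong \mathbb{P}^1 \times \mathbb{P}^1$ because the normal bundle of a line in $\mathbb{P}^3$ is $\mathcal{O}(1)^{\oplus 2}$ — and (ii) the strict transforms $\tilde{L}_S$ of lines in $\mathbb{P}^3$ that are transversal to $\{l_i : i \in S\}$. A direct intersection computation gives
\begin{align*}
D \cdot f_i = m_i, \qquad D \cdot \tilde{L}_S = d - \sum_{i \in S} m_i,
\end{align*}
for a divisor $D = dH - \sum m_i E_i$.

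The three cases correspond to which subsets $S$ actually produce such a transversal. A dimension count on the Grassmannian $G(1,3)$ shows that $k$ general lines admit a common transversal precisely when $k \leq 4$, with exactly two when $k = 4$ by classical Schubert calculus. Thus for $r \leq 4$ every subset is admissible and the binding constraint is the full one $d \geq m_1 + \cdots + m_r$; for $r = 5, 6$ only $|S| \leq 4$ is admissible, producing the five and fifteen four-subset inequalities listed in the statement. Together with $D \cdot f_i = m_i \geq 0$, these yield the forward implication immediately.

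For the converse one must show that the listed inequalities force $D \cdot C \geq 0$ for every irreducible curve $C$. If $\pi_{r,0}(C)$ is a point, then $C$ has class $a f_i + b s_i$ inside $E_i \cong \mathbb{P}^1 \times \mathbb{P}^1$ for some $i$, and positivity follows since $D \cdot s_i = d - m_i$ is non-negative under any transversal constraint that contains the index $i$. If $\pi_{r,0}(C)$ is a curve in $\mathbb{P}^3$, one invokes \cite{DPU} to express $[C]$ as a non-negative combination of the classes $f_i$ and $\tilde{L}_S$. The main obstacle is this last step — verifying that no additional extremal curve contributes a new inequality. It is tightest for $r = 6$: a conic meeting all six lines would in principle give $2d \geq m_1 + \cdots + m_6$, but this is implied by averaging the fifteen four-subset inequalities, so no new face of the nef cone arises. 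Higher-degree rational curves through the lines are handled analogously, completing the description.
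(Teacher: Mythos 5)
Your forward direction is essentially the paper's: intersecting $D$ with the fibers of $E_i \to l_i$, with a general line, and with strict transforms of common transversals to subsets of the $l_i$ (the other ruling of the quadric through three general lines, and the two transversals to four general lines) produces exactly the listed inequalities, and your Schubert count explaining why only $|S|\leq 4$ contributes is correct.

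The gap is in the converse. You reduce it to the assertion that every irreducible curve class on $X_{r,0}^3$ is a non-negative combination of the $f_i$ and the $\tilde{L}_S$, i.e.\ that these classes generate $\overline{\NE}(X_{r,0}^3)$. By duality this is essentially equivalent to the theorem you are proving, so it cannot simply be invoked; and the citation to \cite{DPU} does not cover it --- that paper computes cones of \emph{effective divisors}, and the only inputs the present paper actually extracts from it are the quadric through three general lines, the transversals to four general lines, and the nefness of the anticanonical class. Your conic example plus ``higher-degree rational curves are handled analogously'' is not a proof: for an irreducible curve of degree $e$ with multiplicities $p_i$ along the $l_i$ one must show $ed \geq \sum m_i p_i$ using only the four-subset inequalities, and no averaging of those inequalities gives this in general (that kind of estimate is what the paper's Theorem 4.2 later establishes via B\'ezout with carefully chosen hyperplanes, not via the cone description).

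The paper closes the converse differently and more cheaply: it exhibits any $D$ satisfying the inequalities as a non-negative combination of divisors already known to be nef. For $r\leq 4$ this is $D = (d-\sum m_i)H + \sum m_i(H-E_i)$. For $r=5,6$ it sets $m_1=\min_i m_i$ and writes $D = m_1\bigl(4H-\sum_i E_i\bigr) + \bigl((d-4m_1)H - \sum_{i\geq 2}(m_i-m_1)E_i\bigr)$, where the first summand is the anticanonical class, nef by Theorem 4.4 of \cite{DPU}, and the second reduces to the case of fewer lines. Replacing your Mori-cone step with such an explicit decomposition (or else actually proving the Mori cone is generated by $f_i$ and $\tilde{L}_S$) is what is needed to make your argument complete.
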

\begin{proof}
Let $\tilde{l_i}$ be a class of a line inside the exceptional divisor $E_i$, for $i= 1,..., 6$. If $D = dH - \sum_i^r m_i E_i$ is a nef divisor on $X_{r,0}^3$, then intersecting $D$ with $l$ (a class of general line in $X_{r,0}^3$) and $\tilde{l_i}$ for $i= 1,..., 6$ 
gives $d \geq 0$ and $ m_i \geq 0$ for $i=1,..., 6$.\\
$\bullet$ $\underline{r=1}$:
Let $D= dH - m_1E_1$ be a nef divisor on $X_{1,0}^3$. Consider the line $\tilde{l} - \tilde{l_1}$, then $(dH - m_1E_1) \cdot (\tilde{l} - \tilde{l_1}) = d- m_1 \geq 0$. On the other hand $H$ and $H-E_1$ are nef line bundles on $X_{1,0}^3$ and any divisor $D$ on $X_{1,0}^3$ can be written as $D= dH -m_1E_1 = (d- m_1)H + m_1(H - E_1)$.\\
$\bullet \; \underline {r=2}$:
Let $D= dH - \sum\limits_{i=1}^2m_iE_i$ be a nef divisor on $X_{2,0}^3$  and let us consider the line $\tilde{l}- \tilde{l_1} - \tilde{l_2}$. Since $D$ is nef, $\left(dH- \sum\limits_{i=1}^2 m_iE_i \right) \cdot (\tilde{l}- \tilde{l_1} -\tilde{l_2}) = d- m_1 -m_2 \geq 0$. Conversely, any divisor $D$ on $X_{2,0}^3$ can be written as $D= dH -m_1E_1-m_2E_2 = (d- m_1 - m_2)H + m_1(H - E_1) + m_2(H -E_2)$.\\
$\bullet \; \underline{r=3}$:
Let $D= dH - \sum\limits_{i=1}^3m_iE_i$ be a nef divisor on $X_{3,0}^3$. 
       By Theorem 3.1 in \cite{DPU}, for any three general lines $l_i, l_j, l_k$ in $\mathbb{P}^3$ there exists a unique smooth quadric surface $Q_{i,j,k}$ containing the lines $l_i, l_j, l_k$. If we take our lines as $l_1, l_2, l_3$ in this case then by generality all the three lines should belong to the same ruling of $Q_{1,2,3}$. Then for any line $l_0$ in the other ruling of $Q_{1,2,3}$, $\left(dH - \sum\limits_{i=1}^3m_1E_1  \right) \cdot l_0  = d- m_1 -m_2 - m_3 \geq 0$. Conversely, we write $dH -\sum\limits_{i=1}^3 m_iE_i = (d - m_1 -m_2 -m_3)H + \sum\limits_{i=1}^3 m_i(H - E_i)$.\\
$\bullet \; \underline{r=4}$: 
By Lemma 3.2 in \cite{DPU}, any four general lines $l_i, l_j, l_k, l_l$ in $\mathbb{P}^3$ determine two transversal lines $t, t'$ 
and $D \cdot t = \left(dH- \sum\limits_{i=1}^4 m_i E_i \right) \cdot t = d -m_1 -m_2- m_3  - m_4$. Since $D$ is a nef divisor on $X_{4,0}^3$, we have $d -m_1 -m_2- m_3  - m_4 \geq 0$.
Conversely, we write $dH -\sum\limits_{i=1}^4 m_iE_i = (d - m_1 -m_2 -m_3 - m_4)H + \sum\limits_{i=1}^4 m_i(H - E_i)$.\\
$\bullet \; \underline{r=5}$: 
As mentioned in the previous case, for each choice of four lines ${l_1,..., \hat{l_i},... l_5}$ there exist two transversal lines ${t_i, {t_i}'}$. So for any nef divisor $D$ on $X_{5,0}^3$, $D \cdot t_i = \left(dH - \sum\limits_{i=1}^5 m_i E_i \right) \cdot t_i = d - \{(m_1 + ... +m_5) - m_i\} \geq 0$ $\forall$ $i = 1, 2, ..., 5$.
    
    Conversely, w.l.o.g. assume that $m_1 = \min \{m_1, ..., m_5 \}$ after re-arranging the lines if needed. Write any divisor $D= dH - \sum\limits_{i=1}^5 m_i E_i$ as $dH - \sum\limits_{i=1}^5 m_i E_i = m_1(4H - \sum\limits_{i=1}^5 E_i) + (d- 4m_1) H - \sum\limits_{i=1}^5 (m_ i - m_1)E_i$. Now, by Theorem 4.4 in \cite{DPU}  the anti-cannonical divisor $ -\tilde{K_3} = 4H - \sum\limits_{i=1}^5 E_i$ is nef. The other part is $ (d - 4m_1)H - \sum_{i=2}^5(m_i - m_1)E_i$, which can be considered as a divisor on $X_{4,0}^3$ (considered as the blow-up of $\mathbb{P}^3$ at $l_2, ..., l_5$). Since $m_1 = \min \{m_1,..., m_5\}$ and $d \geq (m_1 +...+ m_5) - m_i$ for all $i \in \{1,2,..., 5\}$, we have $d -4m_1 \geq 0$. So by $r=4$ case, $(d - 4m_1)H - \sum\limits_{i=2}^5(m_i - m_1)E_i$ is nef on $X_{4,0}^3$. Therefore, $(d - 4m_1)H - \sum\limits_{i=2}^5(m_i - m_1)E_i$ is nef on $X_{5,0}^3$, since $X_{5,0}^3$ is obtained by blowing up $l_1$ in $X_{4,0}^3$. As a result we conclude that $D= dH - \sum\limits_{i=1}^5 m_i E_i$ is nef.\\
$\bullet \; \underline{r=6}$: 
Using similar argument as in the previous case, that is, by choosing 4 lines at a time, we can say that a nef divisor $D=dH - \sum\limits_{i=1}^6 m_i E_i$ satisfies the conditions $d \geq m_i +m_j + m_k + m_l$,   $(i<j<k<l)$, $\{ i, j, k, l\} \in \{1, 2, ..., 6\} $. Conversely, any divisor $D$ can be written as $D = dH - \sum\limits_{i=1}^6 m_i E_i = m_1(4H - \sum\limits_{i=1}^6 E_i) + (d- 4m_1) H - \sum\limits_{i=2}^6 (m_ i - m_1)E_i$. Now $(4H - \sum\limits_{i=1}^6 E_i)$ is the anti-canonical divisor, hence nef by \cite[Theorem 4.4]{DPU}. The fact that the other part  i.e., $(d- 4m_1) H - \sum\limits_{i=2}^6 (m_ i - m_1)E_i$ is nef can be shown by the hypothesis and $r=5$ case. So, $D$ is nef on $X_{6,0}^3$.            
\end{proof}
Next we explore the Seshadri constant of an ample line bundle at a general point on $X_{r,0}^3$.
In the Theorem \ref{Seshadri-constant-on-lineblowup-of-P3} (respectively, Theorem \ref{Seshadri-constant-on-lineblowup-of-P4}) below
both the class of a general hyperplane in $\mathbb{P}^3$  (respectively, $\mathbb{P}^4$) and its pull back in 
$X_{r,0}^3$ (respectively, $X_{r,0}^4$) are denoted by $H$. Let $L = dH - \sum\limits_{i=1}^r m_i E_i$ 
be an ample line bundle on $X_{r,0}^k$, the blow of $\mathbb{P}^k$ ($k=3,4,5$) at $r$ general lines. We then denote
\begin{eqnarray*}
 m_L := \min\{m_1,m_2,...,m_r\}.
\end{eqnarray*}
For a curve $C$ in $\mathbb{P}^k$ ($k=3,4,5$), we also denote by $C\cdot l_i = n_i$ to be the length 
of zero dimensional scheme $\{x_{j}^{i} \ |\ 1\leq j \leq k_i,\ \text{for some $k_i \geq 0$} \}$ of intersection of $C$ and $l_i$, and call $n_i = \sum\limits_j n_j^i$ to be the intersection multiplicity of $C$ along $l_i$. Here $n_{j}^{i} := \mult_{x_{j}^{i}}C$ is the usual multiplicity of $C$ at $x_{j}^{i}$. 

\begin{thm}\label{Seshadri-constant-on-lineblowup-of-P3}
Let $X_{r,0}^3$ denote a blow-up of $\mathbb{P}^3$ along $r \leq 4$ general lines $l_1, l_2,...,l_r$ and $x$ be a general point of $X_{r,0}^3$. Let $L = dH- \sum\limits_{i=1}^{r}m_iE_i$ be an ample line bundle on $X_{r,0}^3$, 
then
\begin{enumerate}
    \item $\varepsilon(X_{r,0}^3, L, 1)= d - \sum\limits_{i=1}^{r}m_i,$ \, for $r = 1,2$ 
    \item $\varepsilon(X_{3,0}^3, L, x) \geq d-m_1-m_2-m_3$, with equality if $\pi_{3,0}(x) \in \mathcal{Q}_{1,2,3}$ 
    \item $\varepsilon(X_{4,0}^3, L, x) \geq d-m_1-m_2-m_3-m_4$, with equality if $\pi_{3,0}(x) \in t\cup t'$ 
    \item $\varepsilon(X_{4,0}^3, L, x) \in [d-m_1-m_2-m_3-m_4, \ d-m_i-m_j-m_k]$, if $\pi_{3,0}(x) \in \mathcal{Q}_{i,j,k}\setminus(t \cup t')$.
\end{enumerate}
\end{thm}

\begin{proof}
Let $x \in X_{r,0}^3$ be a general point and let $B$ be a reduced and irreducible curve in $X_{r,0}^3$ passing through $x$ with multiplicity $m$. Since $x$ is assumed to be a general point, we can assume that $x$ does not lie on any exceptional divisors. So, there is a curve $C$ in $\mathbb{P}^3$ such that $B$ is the strict transform of $C$ i.e., $B = \tilde{C}$. Assume that $C$ has multiplicities $n_i$ along $l_i$ and $x_j^i$ ($1\leq j \leq k_i$) are points such that $\sum\limits_{j=1}^{k_i}\mult_{x_j^i}C = \mult _{l_i}C$ for  $i = 1,..., r$. Let $\tilde{l}$ be the pullback of a general line in $\mathbb{P}^3$ and $\tilde{l_i}$ be the class of a line inside $E_i$ for $i=1,..., r$. In each of the following cases, after re-indexing the points $x_1^i,x_2^i, ..., x_{k_i}^i$ if necessary, we assume that $ n_j^i \geq n_{j+1}^i$ for $j = 1,..., k_i-1$. We also re-index the lines $l_1,...,l_r$ if necessary, to assume that $n_i \geq n_{i+1}$ for $i=1,2,...,r-1$.\\
$\bullet$ $\underline {r = 1}$:
Consider the line connecting $\pi_{1,0}(x)$ and $x_1^1$ whose strict transform is $\tilde{l} - \tilde{l_1}$. If $\tilde{C} = \tilde{l} - \tilde{l_1}$ then the Seshadri ratio is $d - m_1$. If $\tilde{C} \neq \tilde{l} - \tilde{l_1}$, then there is a hyperplane $H$ containing $x_1^1$ and $\pi_{1,0}(x)$ but not containing $C$. So, by B\'{e}zout's theorem we get
\begin{eqnarray}\label{eqn:1}
H \cdot C \geq n_1^1 + m,
\end{eqnarray}
Here we consider the following two cases as follows: \\
\textit{Case 1:} ($m \leq C\cdot l_1 - 1$) Note that in this case 
\begin{eqnarray*}
    C\cdot l_1 - 1 \geq m \\
    \Rightarrow \frac{C\cdot l_1}{m} \geq 1+\frac{1}{m}. 
\end{eqnarray*}

Since $H\cdot C \geq C\cdot l_1$, we get 
\begin{eqnarray*}
    \frac{L\cdot B}{m} &=& \frac{dH\cdot C-m_1C\cdot l_1}{m} \cr 
    &\geq& (d-m_1)(1+\frac{1}{m}) \cr 
    &>& d-m_1.
\end{eqnarray*}

\textit{Case 2:} ($m\geq C\cdot l_1$) In this case, since $H\cdot C \geq n_1^1 + m$, we have the following,
\begin{eqnarray*}
    \frac{dH\cdot C-m_1C\cdot l_1}{m} &\geq& \frac{d(n_1^1 + m) -m_1n_1}{m} \cr 
    &=& d + \frac{dn_1^1 - m_1n_1}{m}.
\end{eqnarray*}

Now note that 
\begin{eqnarray}\nonumber
\frac{dn_1^1 - m_1n_1}{m} \geq -m_1 &\Leftrightarrow& dn_1^1 - m_1n_1 + mm_1 \geq 0 \\ \nonumber
&\Leftrightarrow& dn_1^1 + (m-n_1)m_1 \geq 0, \nonumber
\end{eqnarray}
which holds, since $m \geq n_1$ by hypothesis.
So the Seshadri ratio is 
\begin{eqnarray*}
    \frac{L \cdot B}{m} &=& \frac{(dH - m_1E_1) \cdot B}{m} \cr 
    &=& \frac {dH\cdot C - m_1n_1}{m} \cr 
    &\geq& d - m_1.
\end{eqnarray*}

Therefore, 
\[
\varepsilon(X_{1,0}^3, L, 1)= d - m_1.\\
\]
$\bullet$ $\underline {r = 2}$:
As in $r=1$ case, we consider two cases here as well. \\ 
\textit{Case 1:} ($m \leq C\cdot l_1 - 1$) In this case, since 
$C\cdot l_1 \geq C\cdot l_2$ we have $-C\cdot l_2 \geq -C\cdot l_1$ and 
\begin{eqnarray}\nonumber
\frac{L\cdot B}{m} &=& \frac{dH\cdot C-m_1C\cdot l_1-m_2C\cdot l_2}{m} \\ \nonumber
&\geq& (d-m_1-m_2)\left(1+\frac{1}{m}\right),
\end{eqnarray}
since $H\cdot C \geq C \cdot l_1$. \\
\textit{Case 2:} ($m\geq C\cdot l_1$) 
We use \eqref{eqn:1} in this case to get
\begin{eqnarray}\nonumber
\frac{dH\cdot C-m_1C\cdot l_1-m_2C\cdot l_2}{m} &\geq& \frac{d(n_1^1 + m)-m_1n_1-m_2n_2}{m} \\ \nonumber
&=& d + \frac{dn_1^1 - m_1n_1-m_2n_2}{m}.
\end{eqnarray}
But, we have
\begin{eqnarray}\nonumber
\frac{dn_1^1 - m_1n_1-m_2n_2}{m} \geq -m_1-m_2 \\ \nonumber
\Leftrightarrow dn_1^1 + (m-n_1)m_1 + (m-n_2)m_2 \geq 0. 
\end{eqnarray}
This holds because we made the assumption that $m \geq n_1 \geq n_2$. \\
Therefore, we get 
\[
\frac{L\cdot B}{m} \geq d-m_1-m_2.
\]
Now, we show that this least value i.e., $d-m_1-m_2$ is attained by a line containing $\pi_{2,0}(x)$. Note that if we show the existence of a line $l_x$ containing $\pi_{2,0}(x)$ and intersecting $l_1$, $l_2$, we will be done, since the Seshadri ratio of $\tilde{l_x} = \tilde{l}-\tilde{l_1}-\tilde{l_2}$ is $d-m_1-m_2$ and hence
\[
\varepsilon(X_{2,0}^3, L, 1)= d-m_1-m_2.
\]
To see the existence of such line, we consider $J(l_1,l_2)$, the join of $l_1$ and $l_2$. Since these lines are disjoint we know that the dimension of subvariety $J(l_1,l_2)$ is $3$ \cite[Proposition 11.37]{JH}, and hence $J(l_1,l_2) = \mathbb{P}^3$. Therefore $\pi_{2,0}(x) \in \mathbb{P}^3 = J(l_1,l_2)$ and the existence of required line is proved.\\
$\bullet$ $\underline{r=3}$:
Using a similar argument as in the cases of $r=1$ and $r=2$ lines, we can show that 
 \begin{eqnarray}\nonumber
 \frac{L\cdot B}{m} &=& \frac{dH\cdot C-m_1C\cdot l_1-m_2C\cdot l_2-m_3C\cdot l_3}{m} \\ \nonumber
 &\geq& d-m_1-m_2-m_3.
 \end{eqnarray}
This implies that $\varepsilon(X_{3,0}^3, L, 1) \geq d-m_1-m_2-m_3$. Now, we know that 
given three lines $l_1,l_2,l_3$ in general position, there exists a quadric $\mathcal{Q}_{1,2,3} \cong \mathbb{P}^1 \times \mathbb{P}^1$ containing the lines $l_1,l_2,l_3$ on the same ruling. Therefore, if 
$\pi_{3,0}(x) \in \mathcal{Q}_{1,2,3}$, then we consider the line $l_x$ containing $\pi_{3,0}(x)$ and intersecting 
$l_1,l_2,l_3$ transversely. It is then clear that $\tilde{l_x} = \tilde{l} - \tilde{l_1} - \tilde{l_2}- \tilde{l_3}$ and its Seshadri ratio is $d-m_1-m_2-m_3$. Therefore,
\[
\varepsilon(X_{3,0}^3, L, x) = d-m_1-m_2-m_3
\]
if $\pi_{3,0}(x) \in \mathcal{Q}_{1,2,3}$. \\
$\bullet$ $\underline{r=4}$:
In this case also we have 
\begin{eqnarray}\nonumber
 \frac{L\cdot B}{m} &=& \frac{dH\cdot C-m_1C\cdot l_1-m_2C\cdot l_2-m_3C\cdot l_3-m_4C\cdot l_4}{m} \\ \nonumber
 &\geq& d-m_1-m_2-m_3-m_4,
\end{eqnarray}
for similar reason as in the case of $r=1,2$ lines. Now we know that given $4$ general lines $l_1,\cdots, l_4$ in $\mathbb{P}^3$ there exist two lines $t$ and $t'$ transversal to each $l_i$. 

Therefore if $\pi_{4,0}(x) \in (t \cup t')$, we get 
\[
\varepsilon(X_{4,0}^3, L, x) = d-m_1-m_2-m_3-m_4,
\]
and if $\pi_{4,0}(x) \in \mathcal{Q}_{i,j,k} \setminus(t \cup t')$, then
\[
\varepsilon(X_{4,0}^3, L, x) \in [d-m_1-m_2-m_3-m_4,\ d-m_i-m_j-m_k].
\]
\end{proof}
\begin{xrem}
Under the same notation as of previous theorem, note that if $x \in E_j$ for some $j=1,2,...,r$, and $B \subset E_j$ then 
\[
   \frac{L\cdot B}{m} = \frac{m_j\text{deg(B)}}{m} \geq m_j \ (\geq m_L).
\]
Also this value is obtained by a line in $E_j$ containing the point $x$.
Therefore, using this fact and the proof of previous theorem, it is clear that 
\[
\varepsilon(X_{r,0}^3, L) = \min\left\{m_L, d-\sum\limits_{i=1}^{r} m_i \right\}
\]
for $r=1,2,3,$ and $4$.  
\end{xrem} 
In the case of $r=5$, we present the following results. Similar results can also be observed for the 
$r=6$ case, which we omit to avoid repetition.
\begin{prop}\label{Prop:n=3;r=5;Sesh_cons}
Let $X_{5,0}^3$ denote a blow-up of $\mathbb{P}^3$ along $5$ general lines $l_1, l_2,...,l_5$ and $x$ be a general point of $X_{5,0}^3$. 
Let $L = dH- \sum\limits_{i=1}^{5}m_iE_i$ be an ample line bundle on $X_{5,0}^3$ such that $d \geq \sum\limits_{i=1}^{5}m_i$. Then 
\begin{enumerate}
    \item $\varepsilon(X_{5,0}^3, L, 1) \geq d -\sum\limits_{i=1}^{5}m_i$
    \item $\varepsilon(X_{5,0}^3, L, x) \in [d -\sum\limits_{i=1}^{5}m_i,\ \max\limits_{j}\{d -\sum\limits_{i\neq j}m_i \}]$, \text{if\ $\pi_{5,0}(x) \in \bigcup\limits_{m}(t_m \cup t_m')$}
    \item $\varepsilon(X_{5,0}^3, L, x) \in [d -\sum\limits_{i=1}^{5}m_i,\ \max\limits_{i,j,k}\{d -m_i-m_j-m_k\}], \text{if}\ \pi_{5,0}(x) \in \bigcup\limits_{i,j,k,m}(\mathcal{Q}_{i,j,k} \setminus(t_m\ \cup \  t_m'))$.
\end{enumerate}
\end{prop}
\begin{proof}
The proof for $(1)$ is clear. For $(2)$, note that if $\pi_{5,0}(x)$ is on $t_i$ or $t_i'$ for some $i \in\{1,..., 5\}$ and $C$ is either of the $t_i$ or $t_i'$, then the Seshadri ratio of $C$ is $d - \{ (\sum\limits_{j=1}^5 m_j) - m_i \}$. Therefore
\[
\varepsilon(X_{5,0}^3, L, x) \in [d -\sum\limits_{i=1}^{5}m_i,\ \max\limits_{j}\{d -\sum\limits_{i\neq j}m_i \}] 
\]
if $\pi_{5,0}(x) \in \bigcup_{m}(t_m \cup t_m')$. Now if $\pi_{5,0}(x) \in \bigcup\limits_{i,j,k,m}(\mathcal{Q}_{i,j,k} \setminus(t_m\ \cup \  t_m'))$ then take 
$\{i,j,k,m\} \subset \{1,2,...,5\}$ such that $\pi_{5,0}(x) \in \mathcal{Q}_{i,j,k} \setminus(t_m\ \cup \  t_m')$. Let $l$ be the line containing $\pi_{5,0}(x)$ and intersecting $l_i,l_j,l_k$ transversely. The Seshadri ratio of $\tilde{l}$ is then $d -m_i-m_j-m_k$, therefore $(3)$ follows. 
\end{proof}
\begin{xrem}
    Based on the proof of previous proposition (under the same assumption), it is evident that
    \[
    \min\left\{ m_L, d - \sum\limits_{i=1}^5 m_i \right\} \leq \varepsilon(X_{5,0}^3, L) \leq \min\left\{m_L, \min\limits_{j}\{d -\sum\limits_{i\neq j}m_i\}\right\}.
    \]
\end{xrem}
    
\section{Line blow-ups of $\mathbb{P}^4$ and $\mathbb{P}^5$}\label{Line blow-ups of}
    
The following propositions are Proposition 5.1 and Proposition 5.2 of \cite{PP}. We modify and re-write the propositions to make it convenient for our next results on Seshadri constants.
\begin{prop}[\cite{PP}]\label{proposition-3}
Let $X_{r,0}^4$ be a blow-up of $\mathbb{P}^4$ at $r$ general lines $l_1,..., l_r$. Then for $r \leq 7$, the nef cone of divisors on $X_{r,0}^4$ are as follows:
\begin{enumerate}  
\item $\Nef(X_{1,0}^4)= \{ dH - m_1E_1 \,  | \,   d \geq m_1, \, d \geq 0,  \,  m_1 \geq 0 \}$ 
\item $\Nef(X_{2,0}^4) = \{ dH - \sum_{i=1}^2 m_iE_i \,  | \,  d\geq m_1 +m_2, \, d \geq 0, \, m_i \geq 0   \, \text{for}  \, i =1,2 \}$ 
\item $\Nef(X_{r,0}^4) = \{ dH - \sum_{i=1}^r m_iE_i \,  | \,  d\geq m_i +m_j + m_k, \, i<  j<  k, \,  i, j, k \in \{1,..., r\}, \, d \geq 0, \, m_i \geq 0 \; \forall \; i \in \{1,...,r\} \} $ for $3 \leq r \leq 7$.
\end{enumerate}  
\end{prop}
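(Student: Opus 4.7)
The plan is to verify the stated description of $\Nef(X_{r,0}^4)$ in two steps, mirroring the template of Theorem 4.1 for the $\mathbb{P}^3$ case. The necessity of the listed inequalities is direct: for each claimed bound I would exhibit a concrete irreducible curve on $X_{r,0}^4$ whose intersection with $D = dH - \sum_{i=1}^r m_i E_i$ recovers that bound. Intersecting with a general line $\tilde\ell$ (pullback class) yields $d \geq 0$; intersecting with a line $\tilde{l_i}$ in the fiber of $E_i$ yields $m_i \geq 0$; intersecting with the strict transform of a $\mathbb{P}^4$-line meeting $l_i$ (respectively meeting both $l_i,l_j$, respectively meeting $l_i,l_j,l_k$) yields $d \geq m_i$ (respectively $d \geq m_i + m_j$, respectively $d \geq m_i + m_j + m_k$). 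The existence of each such transversal is classical Schubert calculus: in $\mathbb{P}^4$, lines meeting three general lines form a nonempty, zero-dimensional cycle, so triple transversals always exist, giving the triple inequalities for $3 \leq r \leq 7$.

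For sufficiency, I would express any $D$ satisfying the inequalities as a nonnegative $\mathbb{R}$-combination of explicit nef generators. For $r = 1,2$ the decomposition
\[
D = (d - \textstyle\sum_{i=1}^r m_i) H + \sum_{i=1}^r m_i(H - E_i)
\]
works, since $H$ and each $H - E_i$ (coming from the base-point-free pencil of hyperplanes through $l_i$) are nef. For $r = 3$ the same identity is exactly compatible with the single triple hypothesis $d \geq m_1 + m_2 + m_3$, so no further argument is needed.

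For $4 \leq r \leq 7$, after reindexing so that $m_r$ is minimal, the plan is to split off a multiple of the auxiliary class $N_r := 3H - \sum_{i=1}^r E_i$:
\[
D = m_r\, N_r + \Bigl((d - 3 m_r)H - \sum_{i=1}^{r-1}(m_i - m_r) E_i\Bigr),
\]
and then argue by descending induction on $r$ that the residual, viewed as a divisor on $X_{r-1,0}^4$, satisfies the inequalities of the already-established description of its nef cone. The triple inequality $d \geq m_i + m_j + m_k$ translates verbatim into the corresponding inequality for $(d - 3m_r)$ and the shifted multiplicities $m_i - m_r$; minimality of $m_r$ forces $m_i - m_r \geq 0$ and $d - 3m_r \geq 0$.

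The main obstacle is verifying that $N_r = 3H - \sum_{i=1}^r E_i$ is itself nef on $X_{r,0}^4$ for each $3 \leq r \leq 7$. This is the genuinely new input compared to the $\mathbb{P}^3$ case (where the anticanonical divisor played the analogous role), and for this I would invoke the explicit description in \cite{PP} of the base-point-free linear system of cubic hypersurfaces in $\mathbb{P}^4$ containing the $r$ general lines, which produces $N_r$ as a base-point-free class on the blow up. Given that nefness, the induction closes and the two descriptions of the cone coincide.
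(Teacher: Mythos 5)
Your proposal is correct and follows essentially the same route as the paper: both arguments take the nefness of $H$, $H-E_i$ and $3H-\sum_{i=1}^r E_i$ from Proposition 5.1 of \cite{PP}, use the transversal line to three general lines for the necessary inequalities, and for $r\geq 4$ split off a multiple of $3H-\sum_{i=1}^r E_i$ at the minimal $m_i$ and induct down to the $r=3$ case. The only differences are cosmetic (you normalize $m_r$ minimal rather than $m_1$, and you correctly write the residual with a minus sign where the paper has a sign typo).
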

  
\begin{proof}
Note that the inequalities $d \geq 0$ and $m_i \geq 0$ for $i=1,..., 7$ follow similarly as in the proof of Theorem \ref{theorem-2}.
Now, by the proof of Proposition 5.1 in \cite{PP}, nef cones of divisors on $X_{r,0}^4$ for $r \leq 7$ are generated by $H$, $H-E_i$ (for $i=1,...,r$) and $3H - \sum_{i=1}^r E_i$. Furthermore, we know that for any three lines in $\mathbb{P}^4$, there exists a line intersecting all three of them. So, the cases $r=1,2$, and $3$ are clear. For $r=4$, assume $m_1 = \min \{m_1, ..., m_4 \}$ after re-arranging the lines if needed. Write any divisor $D= dH-\sum_{i=1}^4 m_iE_i$ as  
 \[
 dH-\sum_{i=1}^4 m_iE_i =   m_1(3H - \sum_{i=1}^4 E_i) + (d-3m_1)H + \sum_{i=2}^4 (m_i - m_1) E_i. 
 \]
Note that, $3H - \sum_{i=1}^4 E_i$ is nef from Proposition 5.1 in \cite{PP}. The other part i.e., $(d-3m_1)H + \sum_{i=2}^4 (m_i - m_1) E_i$ is nef by $r=3$ case. So, the nef cone in this case can be written as $\Nef(X_{4,0}^4) = \{ dH - \sum_{i=1}^r m_iE_i \,  | \,  d\geq m_i +m_j + m_k, \, i <  j <  k, \,  i, j, k \in \{1,..., 4\}, \, d \geq 0, \, m_i \geq 0 \; \forall \, i \in \{1,...,4\} \}$. 
 Other cases i.e., $r=5,6$, and $7$ follows similarly by an iterative process.
  \end{proof}
\begin{prop}[\cite{PP}]\label{nef-cone-of-P5}
  Let $X_{r,0}^5$ be a blow-up of $\mathbb{P}^5$ at $r$ general lines $l_1,..., l_r$. Let $D = dH - \sum_i^r m_i E_i$ be any divisor on $X_{r,0}^5$. Then for $r \leq 5$, the nef cones of divisors on $X_{r,0}^5$ are described as:
\[  
\Nef(X_{r,0}^5) = \{dH - \sum_{i=1}^r m_iE_i \,  |  \, d \geq m_i + m_j, \,  i \neq j,\,  i, j \in \{1,...,r\}, \, d \geq  0, \, m_i \geq 0, \, \forall \, i\in \{1,..., r\}\}.
\]
\end{prop}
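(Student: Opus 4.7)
The plan is to verify both inclusions of the stated description, following the template used for the earlier nef cone computations in the paper.

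For necessity, suppose $D = dH - \sum_{i=1}^r m_i E_i$ is nef. Let $\tilde{l}$ denote the pullback of a general line and $\tilde{l_i}$ the class of a line inside the exceptional divisor $E_i$. Pairing $D$ with $\tilde{l}$ and with $\tilde{l_i}$ immediately forces $d \geq 0$ and $m_i \geq 0$. For the cross inequalities, I would use the standard dimension count in $G(1,5)$: the family of lines meeting a fixed line has codimension $1$, so any two general lines $l_i, l_j$ in $\mathbb{P}^5$ admit a transversal line $t_{ij}$. Its strict transform on $X_{r,0}^5$ has class $\tilde{l} - \tilde{l_i} - \tilde{l_j}$, and intersecting with $D$ yields $d - m_i - m_j \geq 0$.

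For sufficiency, I would run an induction on $r$ using the nef generators supplied by \cite{PP}, namely $H$, $H - E_i$, and $2H - \sum_{i=1}^r E_i$. The base case $r = 2$ follows from the direct decomposition
\begin{equation*}
dH - m_1 E_1 - m_2 E_2 = m_1 (H - E_1) + m_2 (H - E_2) + (d - m_1 - m_2) H,
\end{equation*}
which is a positive combination of nef classes whenever $d \geq m_1 + m_2$ and $m_1, m_2 \geq 0$. For $r \geq 3$, after relabeling I may assume $m_r = \min_i m_i$ and write
\begin{equation*}
D = m_r \Big( 2H - \sum_{i=1}^r E_i \Big) + (d - 2m_r) H - \sum_{i=1}^{r-1} (m_i - m_r) E_i.
\end{equation*}
The first summand is nef on $X_{r,0}^5$ by \cite{PP}. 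The second summand lives naturally on $X_{r-1,0}^5$, and the hypothesis $d \geq m_i + m_j$ for all pairs in $\{1, \dots, r-1\}$ translates precisely into $(d - 2m_r) \geq (m_i - m_r) + (m_j - m_r)$. By the inductive hypothesis the second summand is nef on $X_{r-1,0}^5$, and it remains nef after pullback to $X_{r,0}^5$, since $X_{r,0}^5$ is obtained from $X_{r-1,0}^5$ by blowing up $l_r$.

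The main obstacle is the nefness of $2H - \sum_{i=1}^r E_i$ on $X_{r,0}^5$ for $r \leq 5$, which drives the entire iterative decomposition; I would import this from \cite{PP}, where it is obtained by exhibiting quadric hypersurfaces containing the $r$ lines and checking positivity of the associated linear system (this is the $\mathbb{P}^5$ analogue of the role played by $4H - \sum E_i$ in the $\mathbb{P}^3$ case and $3H - \sum E_i$ in the $\mathbb{P}^4$ case earlier in the paper). Once that ingredient is in hand, the remainder of the argument is purely numerical bookkeeping on the $m_i$.
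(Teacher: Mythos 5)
Your argument is correct and is essentially the paper's own: the paper merely cites the generators $H$, $H-E_i$, $2H-\sum_{i}E_i$ from \cite{PP} and says to repeat the iterative decomposition of the preceding propositions, which is exactly the peeling-off of $m_r\big(2H-\sum_i E_i\big)$ and induction on $r$ that you carry out (and your numerical check $(d-2m_r)\geq (m_i-m_r)+(m_j-m_r)$ is the right one). One small slip in the necessity direction: in $G(1,5)$ the locus of lines meeting a fixed line has codimension $3$, not $1$; the existence of a transversal to two general lines still holds since $\dim G(1,5)-2\cdot 3=8-6=2\geq 0$, or more simply because the two lines span a $\mathbb{P}^3$ and one may join any point of one to any point of the other.
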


\begin{proof}
For $r\leq 5$, the nef cones $\Nef(X_{r,0}^5)$ are generated by $H$, $H - E_i$ (for $i = 1,..., r$) and $2H - \sum_{i=1}^r E_i$ (see Proposition 5.2 in \cite{PP}). From that we can describe the nef cones by following the method of the previous Proposition.
\end{proof}
Next, we examine the Seshadri constants of ample line bundles on $X_{r,0}^k$ ($k=4,5$). 
For simplification, we denote by $J_{j,k}^i := J(l_i,l_j) \cap J(l_i,l_k)$ to be the intersection of 
joins of $l_i,l_j$ and $l_i,l_k$. 

\begin{thm}\label{Seshadri-constant-on-lineblowup-of-P4}
Let $\pi_{r,0} : X_{r,0}^4 \rightarrow \mathbb{P}^4$ denote a blow-up of $\mathbb{P}^4$ along $r \leq 3$ general lines and $x$ be a general point of $X_{r,0}^4$. Let 
$L = dH - \sum\limits_{i=1}^{r}m_iE_i$ be an ample line bundle on $X_{r,0}^4$, then 
\begin{enumerate}
    \item $\varepsilon(X_{1,0}^4, L, 1) = d - m_1$ 
    \item $\varepsilon(X_{2,0}^4, L, x) \geq d - \sum\limits_{i=1}^2 m_i$, with equality if   $\pi_{2,0}(x) \in J(l_1,l_2)$ 
    \item $\varepsilon(X_{3,0}^4, L, x) \in [d-\sum\limits_{i=1}^3 m_i, \ \max\limits_{j} \{d - \sum\limits_{i\neq j} m_i\}]$, if $\pi_{3,0}(x) \in J_{2,3}^1  \cup J_{1,3}^2  \cup J_{1,2}^3$.
\end{enumerate}
\end{thm}

\begin{proof}
Let $x \in X_{r,0}^4$ be a general point and let $B$ be a reduced and irreducible curve in $X_{r,0}^4$ passing through $x$ with multiplicity $m$. Since $x$ is assumed to be a general point, we can assume that $x$ does not lie on any exceptional divisors. So, there is a curve $C$ in $\mathbb{P}^4$ such that $B = \tilde{C}$. Assume that $C$ has multiplicities $n_i$ along $l_i$ and 
let $\tilde{l}$ be the pullback of a general line in $\mathbb{P}^4$ and $\tilde{l_i}$ be the class of a line inside $E_i$ for $i=1,..., r$. In each of the following cases, after re-indexing the points $x_1^i,x_2^i, ..., x_{k_i}^i$ if necessary, we assume that $ n_j^i \geq n_{j+1}^i$ for $j = 1,..., k_i-1$. We also re-index the lines $l_1,...,l_r$ if necessary, to assume that $n_i \geq n_{i+1}$ for $i=1,2,...,r-1$.
\\
$\bullet$ $\underline{r=1, 2, 3}$:
 Arguing Similarly to the $r=1, 2, 3$ cases of Theorem \ref{Seshadri-constant-on-lineblowup-of-P3} we obtain the following inequalities:
\[ \varepsilon(X_{r,0}^4, L, 1)   \geq   d - \sum\limits_{i=1}^{r}m_i, \, \,    r = 1,2,3. \]
For $r=1$ consider the line joining $\pi_{1, 0}(x)$ and $x_{1}^1$ whose strict transform is $\tilde{l} - \tilde{l_1}$ and the corresponding Seshadri ratio is $d-m_1$ therefore in this case 
\[
\varepsilon(X_{1,0}^4, L, 1) = d-m_1.
\]
In case of $r=2$, if the point $\pi_{2,0}(x)$ is in a hyperplane of $\mathbb{P}^4$ i.e., $\pi_{2,0}(x) \in J(l_1,l_2) \cong \mathbb{P}^3$, we choose the line (denoted by) $l_x$ to be the line containing $\pi_{2,0}(x)$ and intersecting the given lines $l_1$, $l_2$. Clearly, 
$\tilde{l_x} = \tilde{l} - \tilde{l_1}-\tilde{l_2}$ and its Seshadri ratio is $d-m_1-m_2$. Therefore, in this case we get 
\[
\varepsilon(X_{2,0}^4, L, x) = d-m_1-m_2.
\]
For $r=3$, we note that if $\pi_{3,0}(x) \in J_{j,k}^i$ (for some $i,j,k$) 
then there exists a line, say $l_x$, containing $\pi_{3,0}(x)$ and intersecting $l_i,l_j$ and another line $l_x'$, containing $\pi_{3,0}(x)$ and intersecting $l_i,l_k$ transversely. Therefore the Seshadri ratio of $\tilde{l_x} = \tilde{l} - \tilde{l_i}-\tilde{l_j}$ and $\tilde{l_x'} = \tilde{l} - \tilde{l_i}-\tilde{l_k}$ are $d-m_i-m_j$ and $d-m_i-m_k$ respectively. Hence,  
\[
\varepsilon(X_{3,0}^4, L, x) \leq \max\limits_{j} \left\{d - \sum\limits_{i\neq j} m_i\right\},
\]
which establishes ($3$).
\end{proof} 

\begin{xrem}\label{Remark-on-P4-blowup-at-3-general-line}
Note that in the above theorem, if $\pi_{3,0}(x) \in l_j \cap J(l_i,l_k)$, then there is a line 
$l_x$ containing $\pi_{3,0}(x)$ and intersecting $l_1,l_2$, and $l_3$ transversely. The Seshadri ratio of this line is $d-m_1-m_2-m_3$. Therefore, we get 
\[
\varepsilon(X_{3,0}^4, L, x) \leq d-m_1-m_2-m_3.
\]
Also, since the point $x \in E_j$, we observe that for all curves $B \subset E_j$ passing through $x$ with multiplicity $m$, we have:
\[
\varepsilon(X_{3,0}^4, L, x) = \frac{L \cdot B}{m} = \frac{m_j \text{deg}(B)}{m} \geq m_j.
\]
Whereas the Seshadri ratio of a line inside $E_j$ is $m_j$. It then follows that:
\[
\varepsilon(X_{3,0}^4, L,x) = \min \{d-m_1-m_2-m_3, m_j\},
\]
if $\pi_{3,0}(x) \in l_j \cap J(l_i,l_k)$.
\end{xrem}

For $r=4$ lines, we have the following result, and similar results can be observed for $r=5,6,$ and $7$ cases as well.
\begin{prop}\label{prop about P4}
Let $X_{4,0}^4$ denote a blow-up of $\mathbb{P}^4$ along $4$ general lines $l_1, l_2, l_3, l_4$ and $x$ be a general point of $X_{r,0}^4$. Let $L = dH- \sum\limits_{i=1}^{4}m_iE_i$ be an ample line bundle on $X_{4,0}^4$ such that $d \geq \sum\limits_{i=1}^{4}m_i$. Then 
\begin{enumerate}
    \item $\varepsilon(X_{4,0}^4, L, 1) \geq d -\sum\limits_{i=1}^{4}m_i$
    \item $\varepsilon(X_{4,0}^4, L, x) \in [d -\sum\limits_{i=1}^{4}m_i,\ \max\limits_{i,j}\{d -m_i-m_j \}]$, \ \text{if\ $\pi_{4,0}(x) \in \bigcup\limits_{i\neq j}J(l_i,l_j)$}.
\end{enumerate}
\end{prop}
\begin{proof}
    The proof of $(1)$ follows the same approach as for $r=1,2$ and $3$ cases in Theorem \ref{Seshadri-constant-on-lineblowup-of-P3}. To see $(2)$, note that if $\pi_{4,0}(x) \in J(l_i,l_j)$, then there exists a line $l_x \subset J(l_i,l_j)$ containing $\pi_{4,0}(x)$ with a Seshadri ratio of $d-m_i-m_j$.
\end{proof}

\begin{thm}\label{Sesh-cons-on-blow-up-of-P5}
Let $\pi_{r,0} : X_{r,0}^5 \rightarrow \mathbb{P}^5$ denote a blow of $\mathbb{P}^5$ along $r =1,2$ general lines 
and $x$ be a general point of $X_{r,0}^5$. Let 
$L = dH - \sum\limits_{i=1}^{r}m_iE_i$ be an ample line bundle on $X_{r,0}^5$, then 
\begin{enumerate}
 \item $\varepsilon(X_{1,0}^5, L, 1) = d - m_1$ 
 \item $\varepsilon(X_{2,0}^5, L, x) \geq  d - \sum\limits_{i=1}^2 m_i, \ \text{with equality if}\ \pi_{2,0}(x) \in J(l_1,l_2)$.
\end{enumerate}
\end{thm}

\begin{proof}
Using Proposition \ref{nef-cone-of-P5} it can be proved similarly as the Theorem \ref{Seshadri-constant-on-lineblowup-of-P4}.
\end{proof}
\begin{xrem}
Under the assumption that $L = dH - \sum\limits_{i=1}^{r}m_iE_i$ is ample and $d\geq \sum\limits_{i=1}^{r}m_i$, similar results to Proposition \ref{prop about P4} can be obtained for $r=3,4$, and $5$ line blow-ups of $\mathbb{P}^5$.
\end{xrem}

We end with the following remark about the global Seshadri constants $\varepsilon(X_{r,0}^k,L)$ for $k=4$, and $5$.

\begin{xrem}
    Under the same assumption as in Theorem \ref{Seshadri-constant-on-lineblowup-of-P4}, 
    Proposition \ref{prop about P4}, and Theorem \ref{Sesh-cons-on-blow-up-of-P5}, the following results 
    about the global Seshadri constants follow directly from the proofs of these respective results and Remark 
    \ref{Remark-on-P4-blowup-at-3-general-line} 
    \begin{enumerate}
        \item $\varepsilon(X_{r,0}^k, L) = \min\left\{m_L, d-\sum\limits_{i=1}^{r} m_i \right\}$, \text{for} 
        $(k,r) \in \{4\} \times \{1,2,3\} \cup \{5\} \times \{1,2\} $\\
        \item $\min\left\{m_L, d-\sum\limits_{i=1}^r m_i \right\}\leq \varepsilon(X_{r,0}^k, L) \leq \min\left\{m_L,\min\limits_{i\neq j \in \{1,...,r\}} \{d- m_i-m_j\}\right\}$, \text{for} $(k,r) \in \{4\}\times \{4,5,6,7\} \cup \{5\}\times \{3,4,5\} $.
        \end{enumerate}
\end{xrem}

\subsection*{Acknowledgement}
The authors would like to thank Prof. Krishna Hanumanthu, CMI for his valuable suggestions and comments during the preparation of the manuscript.
The authors also thank the anonymous referee for reading the whole manuscript in details and giving multiple suggestions to improve the quality of the paper.
The first named author was supported partially by a grant from the Infosys Foundation initially and by NBHM (0204/1(18)/2022/ R\&D-II/1225) in the later stages of preparation of the manuscript. The second named author was funded 
by the postdoctoral fellowship of Shami Shamoon College of Engineering, Ashdod, Israel during the part of preparation of this manuscript.

\bibliographystyle{plain}

\end{document}